\def\MT@register@subst@font{\MT@exp@one@n\MT@in@clist\font@name\MT@font@list
   \ifMT@inlist@\else\xdef\MT@font@list{\MT@font@list\font@name,}\fi}
\theoremstyle{plain}
\newtheorem{theorem}{Theorem}
\newtheorem{corollary}[theorem]{Corollary}
\newtheorem{proposition}[theorem]{Proposition}
\newtheorem{mytheorem}{Theorem}
\newtheorem*{KST1}{K\H{o}vari-S\'os-Tur\'an Theorem I}
\newtheorem*{KST2}{K\H{o}vari-S\'os-Tur\'an Theorem II}
\theoremstyle{definition}
\newcommand{\setbuilder}[2]{\left\{#1\;\colon\,#2\right\}}
\newcommand{\set}[1]{\left\{#1\right\}}
\newcommand{\epsi}{\varepsilon}
\newcommand{\fhi}{\varphi}
\newcommand{\abs}[1]{\left\lvert#1\right\rvert}
\newcommand{\myangle}{\sphericalangle}
\newcommand{\length}[1]{\lvert#1\rvert}
\newcommand{\card}[1]{\lvert#1\rvert}
\newcommand{\numbersystem}[1]{\mathbb{#1}}
\newcommand{\bN}{\numbersystem{N}}
\newcommand{\bR}{\numbersystem{R}}
\newcommand{\bZ}{\numbersystem{Z}}
\newcommand{\dE}{\vec{E}}
\newcommand{\dG}{\vec{G}}
\newcommand{\dK}{\vec{K}}
\newcommand{\collection}[1]{{\mathcal#1}}
\newcommand{\CC}{\collection{C}}
\newcommand{\CL}{\collection{L}}
\newcommand{\vect}[1]{\bm{#1}}
\newcommand{\vx}{\vect{x}}
\newcommand{\vy}{\vect{y}}
\newcommand{\define}[1]{\emph{#1}}
\title{Favourite distances in 3-space}
\author{Konrad J.\ Swanepoel\footnote{Department of Mathematics, London School of Economics and Political Science, United Kingdom. Email: \href{mailto:k.swanepoel@lse.ac.uk}{k.swanepoel@lse.ac.uk}}}
\date{}
\begin{document}
\maketitle

\begin{abstract}
Let $S$ be a set of $n$ points in Euclidean $3$-space.
Assign to each $x\in S$ a distance $r(x)>0$, and let $e_r(x,S)$ denote the number of points in $S$ at distance $r(x)$ from $x$.
Avis, Erd\H{o}s and Pach (1988) introduced the extremal quantity $f_3(n)=\max\sum_{x\in S}e_r(x,S)$, where the maximum is taken over all $n$-point subsets $S$ of $3$-space and all assignments $r\colon S\to(0,\infty)$ of distances.
We show that if the pair $(S,r)$ maximises $f_3(n)$ and $n$ is sufficiently large, then, except for at most $2$ points, $S$ is contained in a circle $\CC$ and the axis of symmetry $\CL$ of $\CC$, and $r(x)$ equals the distance from $x$ to $C$ for each $x\in S\cap\CL$.
This, together with a new construction, implies that $f_3(n)=n^2/4 + 5n/2 + O(1)$.
\end{abstract}

\section{Introduction}
Let $S$ be a set of $n$ points in the $d$-dimensional Euclidean space $\bR^d$.
We write $d(x,y)$ for the Euclidean distance between $x$ and $y$, and $d(x,A)=\min\setbuilder{d(x,a)}{a\in A}$ for the distance from $x$ to the finite set $A\subset\bR^d$.
Let $r\colon S\to(0,\infty)$ be a choice of a positive number for each point in $S$.
Define the \define{favourite distance digraph on $S$ determined by $r$} to be the directed graph $\dG_r(S)=(S,\dE_r(S))$ on the set $S$ with arcs
\[\dE_r(S):=\setbuilder{(x,y)}{x,y\in S\text{ and }d(x,y)=r(x)}\text{.}\]
Let $\card{A}$ denote the cardinality of the set $A$.
Define
\[ f_d(n) := \max\setbuilder{\card{\dE_r(S)}}{S\subset\bR^d, \card{S}=n \text{ and } r\colon S\to(0,\infty)}\text{.} \]

The problem of determining $f_d(n)$ was originally introduced by Avis, Erd\H{o}s and Pach \cite{AEP}, who showed that
\[\frac{n^2}{4}+\frac{3n}{2} \leq f_3(n)\leq\frac{n^2}{4}+ an^{2-b}\] for some constants $a,b>0$.
Our main result is the following asymptotic improvement:
\begin{mytheorem}\label{thm:3d}
For all sufficiently large $n$, \[\left\lceil\frac{n^2}{4}+\frac{5n}{2}\right\rceil+1\leq f_3(n)\leq \left\lceil\frac{n^2}{4}+\frac{5n}{2}\right\rceil+12.\]
\smallskip
\end{mytheorem}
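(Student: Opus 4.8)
The plan is to establish the two inequalities separately: the lower bound by an explicit construction, and the upper bound by invoking the structural description of an optimal pair $(S,r)$ announced above and then optimising a careful count of arcs. Throughout I place a circle $\CC$ of radius $\rho$ in a horizontal plane, with its axis of symmetry $\CL$ the vertical line through the centre, and I exploit the elementary fact that \emph{every} point of $\CC$ lies at distance $\sqrt{\rho^2+h^2}$ from the point of $\CL$ at height $h$; consequently an axis point whose favourite distance equals its distance to $\CC$ sends an arc to all of $\CC$. I write $m$ and $k$ for the number of chosen points on $\CC$ and on $\CL$ respectively.

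For the lower bound I would take $m$ points forming a regular $m$-gon on $\CC$ and $k$ points on $\CL$, with $m+k=n$ and $m\approx(n+3)/2$. Each circle point is assigned the chord length $\ell=2\rho\sin(\pi t_0/m)$ for a fixed step $t_0$ with $\ell>\rho$; this sends it to its two $\pm t_0$-neighbours on $\CC$, and, provided $\rho$ and $t_0$ are tuned so that $\sqrt{\ell^2-\rho^2}$ is an admissible height $h$, also to the two symmetric axis points at heights $\pm h$, giving out-degree $4$. Each axis point is assigned its distance to $\CC$, sending it to all $m$ circle points; to harvest one further arc per axis point I would place the heights as a symmetric set closed (as far as possible) under the map $g(h)=\sqrt{\rho^2+h^2}-h$, because the point at height $h$ then also reaches the axis point at height $h-\sqrt{\rho^2+h^2}=-g(h)$, again at distance $\sqrt{\rho^2+h^2}$. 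This yields $km+4m+(k-O(1))$ arcs; substituting $m\approx(n+3)/2$ gives $\tfrac{n^2}{4}+\tfrac{5n}{2}+O(1)$, and the remaining work is to adjust the parity of $n$, the endpoints of the chain under $g$, and at most two auxiliary points so as to reach exactly $\lceil n^2/4+5n/2\rceil+1$.

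For the upper bound I would start from an optimal $(S,r)$ and apply the structural theorem: all but at most two points lie on $\CC\cup\CL$, and every axis point's favourite distance equals its distance to $\CC$. I then bound four families of arcs. The axis-to-circle arcs number exactly $km$. For the axis-to-axis arcs the decisive observation is that, since the favourite distance at height $h'$ is forced to be $\sqrt{\rho^2+h'^2}$, an arc into the axis point at height $h''$ forces its tail to lie at the \emph{single} height $h'=(h''^2-\rho^2)/(2h'')$; hence each axis point has in-degree at most $1$ among axis points, so there are at most $k$ such arcs. Each circle point, using one distance value, reaches at most two further circle points and at most two axis points, so the circle points contribute at most $4m$. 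Finally, a sphere about an exceptional point meets $\CC$ and $\CL$ in at most two points each, and a given exceptional point can be aimed at by an axis point for only one height, so once one also controls how many circle points can aim at an exceptional point the at most two exceptional points contribute only $O(1)$. Summing gives $\card{\dE_r(S)}\le km+4m+k+O(1)$, and maximising $km+4m+k$ subject to $k+m\le n$ (the optimum is near $m=(n+3)/2$) yields $\tfrac{n^2}{4}+\tfrac{5n}{2}+O(1)$, which one checks is at most $\lceil n^2/4+5n/2\rceil+12$.

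The heart of the matter, and the step I expect to be hardest, is pinning down the linear coefficient. Naively a circle point could have out-degree up to $6$ and an axis point could reach two axis-neighbours, which would push the coefficient up to $5$ rather than $5/2$; it is precisely the in-degree-at-most-one phenomenon on the axis (equivalently the injectivity of $g$) that halves it, so this lemma is the crux of the upper bound. On the construction side the analogous difficulty is a genuine tension between the two bonuses: the circle points want a \emph{symmetric} pair of target heights $\pm h$, whereas the axis-to-axis bonus wants a \emph{monotone} chain of heights under $g$, so the chain must be realised inside a symmetric height set, and simultaneously the incoming arcs from the circle to the exceptional points must be controlled tightly enough to be absorbed by the additive constant $12$.
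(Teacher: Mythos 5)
Your overall architecture matches the paper's (suspension construction for the lower bound; invoke the structural theorem and count arcs for the upper bound, with the in-degree-at-most-one lemma on the axis playing the same role), but your upper bound has a genuine gap at exactly the point you flag and then leave open: the arcs from circle points into the exceptional points are \emph{not} $O(1)$, and no such bound can be proved. The structural theorem constrains the favourite distances only of the axis points (where $r$ is forced to equal the distance to $\CC$); the circle points' distances are free. Hence every one of the $m$ circle points may aim at a single exceptional point $y$ --- simply $r(x)=d(x,y)$ for all $x\in C$ --- while still reaching two points of $\CC$ and two of $\CL$, so $e_r(C,T)$ can be as large as $\card{T}\,m$. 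Your intermediate bound $km+4m+k+O(1)$ is therefore false when $T\neq\emptyset$; the corrected count is $km+(4+\card{T})m+k+O(1)$, and maximising \emph{that} subject to your stated constraint $k+m\leq n$ gives only $n^2/4+7n/2+O(1)$, which loses the theorem. The repair, which is what the paper does, is to keep the trivial bound $e_r(C,T)\leq\card{T}\,m$ but exploit that the suspension then has only $n-\card{T}$ points: the extra $\card{T}\,m$ term is absorbed into the product because $k+m=n-\card{T}$, yielding $(\ell+5)(c+1)\leq\lceil n^2/4+5n/2\rceil+6$ when $\card{T}=1$ and $(\ell+6)(c+1)+6\leq\lceil n^2/4+5n/2\rceil+12$ when $\card{T}=2$. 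This accounting is precisely where the constant $12$ comes from, so it cannot be deferred.

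On the lower bound, the tension you identify (symmetric target heights $\pm h$ for the circle points versus a monotone chain under $g$ on the axis) is real, and your chain-based resolution cannot actually reach the claimed constant $+1$. A single chain under $s^-(h)=h-\sqrt{\rho^2+h^2}$ contains a symmetric pair $\pm h$ only for special $h$ (e.g.\ $h=\rho/\sqrt3$ in one step), and by the Newman-type rigidity quoted in the paper the corresponding chord angle on $\CC$ is then an irrational multiple of $\pi$, so your circle points cannot close up into polygons and lose arcs; alternatively, two chains through $+1$ and $-1$ separately lose an arc on the axis. Either way you end up at $\lceil n^2/4+5n/2\rceil-O(1)$, short of $+1$. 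The paper's fix is to replace the chain by the full binary tree generated by both successors $s^{\pm}(x)=x\pm\sqrt{1+x^2}$ rooted at $0$: this gives $\ell-1$ axis arcs while automatically containing the symmetric pairs $\pm1$ and $\pm(\sqrt2-1)$, so the circle points can be taken as inscribed squares (chord $\sqrt2$, aiming at heights $\pm1$), with one specially placed extra square handling $c\not\equiv 0\pmod 4$ so that every circle point has out-degree exactly $4$. Your approach is correct in spirit for $n^2/4+5n/2+O(1)$, but both the exact $+1$ below and the exact $+12$ above require the specific devices you postponed.
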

The upper bound follows from the following structural result and the lower bound from a construction in Section~\ref{section:suspension}.
A finite set $S$ of points in $\bR^3$ and a function $r\colon S\to(0,\infty)$ are called a \define{suspension} if $S$ is contained in the union of some circle $\CC$ and its axis of symmetry $\CL$, and $r\colon S\to(0,\infty)$ satisfies $r(x)=d(x,\CC)$ for all $x\in S\cap\CL$.
If $\card{S}>7$ then $\CC$ and $\CL$ are uniquely determined by $S$.
The next result states that if $S$ and $r$ are extremal, then $S$ is a suspension except for at most $2$ points.
\begin{mytheorem}\label{thm:3dextremal}
Let $S\subset\bR^3$ be finite and $r\colon S\to(0,\infty)$ be such that $e_r(S)=f_3(\card{S})$.
If $\card{S}$ is sufficiently large then for some $T\subseteq S$ with $\card{T}\leq 2$, $S\setminus T$ is a suspension with circle $\CC$ and symmetry axis $\CL$.
\end{mytheorem}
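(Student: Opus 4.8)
The plan is to read off the extremal structure directly from the geometry of spheres. For $x\in S$ write $S_x:=\partial B(x,r(x))$ for the sphere on which all out-neighbours of $x$ lie, so that the out-degree of $x$ in $\dG_r(S)$ equals $\card{S_x\cap S}$ and $\card{\dE_r(S)}=\sum_{x\in S}\card{S_x\cap S}=f_3(\card{S})$. Since $f_3(n)\geq n^2/4$, the average out-degree, and hence the average in-degree, is at least $n/4$. The governing geometric fact is that three spheres whose centres are not collinear meet in at most two points; equivalently, a $K_{3,3}$ in the bipartite incidence structure between the spheres $\{S_x\}$ and the points of $S$ forces its three centres onto a line. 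I would combine this local constraint with the global lower bound on the number of arcs to extract first a line, then a circle.

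First I would produce the axis $\CL$. Counting incidences of triples of centres with common targets gives, by convexity applied to the in-degrees, \[\sum_{y\in S}\binom{d^-(y)}{3}=\sum_{T}m_T\geq n\binom{n/4}{3}=\Theta(n^4),\] where the sum is over triples $T$ of centres and $m_T$ denotes the number of their common out-neighbours. Non-collinear triples satisfy $m_T\leq 2$ and so contribute only $O(n^3)$; hence collinear triples contribute $\Theta(n^4)$, and since $m_T\leq n$ there are $\Theta(n^3)$ collinear triples among the $n$ points of $S$. A standard count — any arrangement in which every line carries at most $k$ points has $O(kn^2)$ collinear triples — then forces a line $\CL$ carrying $\Omega(n)$ points of $S$. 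The complementary \emph{non-degenerate} incidences, in which no large $K_{3,3}$ with collinear centres occurs, are bounded by the K\H{o}vari-S\'os-Tur\'an theorem by $o(n^2)$, so essentially all of the $\sim n^2/4$ arcs are accounted for by spheres centred on $\CL$.

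Next I would produce the circle $\CC$. Two spheres centred on $\CL$ meet in a circle lying in a plane orthogonal to $\CL$ and centred on $\CL$, and three such spheres with at least three common points share a single such circle of latitude; thus each rich collinear triple on $\CL$ pins its common neighbours to one circle about $\CL$. Using the extremality of $(S,r)$ I would show that a single circle $\CC$ must capture $\Omega(n)$ of these points: were the off-axis targets spread over several latitude circles, redistributing them onto one circle and resetting $r(x)=d(x,\CC)$ for $x\in S\cap\CL$ would only increase $\sum_{x}\card{S_x\cap S}$, contradicting maximality. The same exchange argument forces $r(x)=d(x,\CC)$ for every axis point and confines all but a bounded number of points to $\CC\cup\CL$.

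I expect the main obstacle to be the final, exact stage rather than the asymptotic one: turning ``all but $O(1)$ points form a suspension with some circle and axis'' into the sharp statement that at most two points are exceptional and that $(\CC,\CL)$ is unique. This requires a delicate local optimisation that weighs the gain from placing a point on $\CC\cup\CL$ with the canonical radius against the few arcs it could otherwise contribute, carried out uniformly over all near-extremal configurations; controlling the interaction of the exceptional points with the bipartite $\CC$--$\CL$ structure, and ruling out a second competing circle, is where the count of exceptional points must be pushed down to exactly $2$.
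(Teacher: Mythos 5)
Your first stage is sound, and it is a genuinely different route to the same structural information the paper extracts: your convexity count $\sum_y\binom{d^-(y)}{3}=\Theta(n^4)$, the fact that three spheres with non-collinear centres meet in at most two points, and the standard bound of $O(kn^2)$ collinear triples when no line carries more than $k$ points do yield a line with $\Omega(n)$ points, where the paper instead takes a maximal complete bipartite digraph $\dK_{\ell,c}$ (via the K\H{o}vari--S\'os--Tur\'an theorem), whose two classes automatically form a suspension, line and circle at once. The genuine gap is exactly where you say you ``expect the main obstacle'': nothing in the proposal delivers $\card{T}\leq 2$, and the substitute you offer, the exchange argument, does not work as stated. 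Relocating off-axis points onto a single latitude circle and resetting $r$ on the axis does not ``only increase'' the arc count: if $y$ is moved, then every $x\in S$ with $d(x,y)=r(x)$ loses the arc $(x,y)$, and there can be $\Theta(n)$ such $x$ (consider also that $y$'s own sphere may contain many points of $S$, all of whose out-arcs from $y$ are lost). So per moved point the net change is a difference of two $\Theta(n)$ quantities, and controlling its sign uniformly over near-extremal configurations is precisely the difficulty; no local perturbation of this kind appears in the paper.

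What the paper does instead, and what your sketch is missing, is a purely counting-based mechanism in which the sharp lower bound plays an essential role. One needs (i) the construction (Theorem~\ref{thm:suspension}) giving $f_3(n)\geq\lceil n^2/4+5n/2\rceil+1$ --- note your proposal only ever invokes $f_3(n)\geq n^2/4$, which is too weak to bound $\card{T}$ by any constant; (ii) the decomposition bound of Corollary~\ref{cor:induction}, in which the $t$ points outside the maximal suspension carry an explicit cost term $-2nt$ (a point of $T$ receives at most one arc from all of $L$ and sends at most a bounded number into $C\cup L$, so it forfeits roughly $n/2$ arcs compared to a point on $\CC$); and (iii) the stability theorem (Theorem~\ref{thm:3dstability}) to guarantee $t\leq n/4$ so that the quadratic term $2t^2$ cannot compensate. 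Feeding the lower bound into the upper bound then gives $0\leq 2n(2-t)+2t^2+Btn^{2/3}$ on $[0,n/4]$, which fails for $t\geq 3$ once $n$ is large: each exceptional point costs about $n/2$ arcs while the total slack between the bounds is only about $n$, so $t\leq 2$ falls out of arithmetic. Without an ingredient of this type, the triple-counting and K\H{o}vari--S\'os--Tur\'an estimates in your proposal can only confine $S$ to a suspension up to $o(n)$ points, which is the content of the paper's stability theorem, not of the statement to be proved.
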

We conjecture that the exceptional set $T$ is empty if $\card{S}$ is large.
We prove the above theorem using the following stability result, which states that if $(S,r)$ is almost extremal, then it is a suspension up to $o(n)$ points.
\begin{mytheorem}\label{thm:3dstability}
For each $\epsi>0$ there exists $\delta>0$ and $n_0\in\bN$ such that if $n\geq n_0$ and $S\subset\bR^3$ is a set of $n$ points with $e_r(S) > (\frac{1}{4}-\delta)n^2$, then for some $T\subset S$ with $\card{T}<\epsi n$, $S\setminus T$ is a suspension with circle $\CC$ and symmetry axis $\CL$, $\abs{\card{S\cap\CC}-n/2}<\epsi n$ and $\abs{\card{S\cap\CL}-n/2}<\epsi n$.
\end{mytheorem}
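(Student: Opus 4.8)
The plan is to pass to the bipartite incidence graph $\Gamma$ on two copies of $S$, joining a \emph{centre} $x$ to a \emph{head} $y$ whenever $d(x,y)=r(x)$, so that $e_r(S)=\card{\dE_r(S)}$ is the number of edges $e(\Gamma)$ and the hypothesis reads $e(\Gamma)>(\tfrac14-\delta)n^2$. Everything rests on three elementary facts of spherical geometry. First, any three points on a sphere are non-collinear and hence lie on a unique circle $\CC$, whose axis $\CL$ passes through the centre of the sphere; consequently a copy of $K_{3,3}$ in $\Gamma$, with heads $y_1,y_2,y_3$ and centres $x_1,x_2,x_3$, forces $y_1,y_2,y_3$ onto a common circle $\CC$ with $x_1,x_2,x_3$ on its axis and $r(x_i)=d(x_i,\CC)$; I call such $x_i$ \emph{apexes} of $\CC$. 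Second, a point $x$ on the axis $\CL$ is equidistant from all of $\CC$, so its sphere either contains all of $\CC$ (when $x$ is an apex) or misses it entirely, while a centre off $\CL$ meets $\CC$ in at most two points. Third, two distinct circles, and also a line and a circle, meet in at most two points. Writing $p(\CC)=\card{S\cap\CC}$ and $g(\CC)$ for the number of apexes of $\CC$, the first fact gives the exact identity
\[ \#K_{3,3}(\Gamma)=\sum_{\CC}\binom{p(\CC)}{3}\binom{g(\CC)}{3}, \]
the sum over all circles, since a $K_{3,3}$ is recovered uniquely from its circle, three concyclic heads and three apexes.

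Next I would locate one dominant circle. Since $e(\Gamma)=\Omega(n^2)$, the supersaturation form of the K\H{o}vari-S\'os-Tur\'an theorem yields $\#K_{3,3}(\Gamma)\ge c\,n^6$ for some $c=c(\delta)>0$. Call a circle \emph{rich} if $p(\CC)\ge\tau n$. Because distinct circles share at most two points, a packing estimate shows there are only $O(1/\tau)$ rich circles. The poor circles contribute little: since a triple of apexes determines its circle one has $\sum_\CC\binom{g(\CC)}{3}\le\binom{n}{3}$, so poor circles account for at most $\binom{\tau n}{3}\binom{n}{3}=O(\tau^3 n^6)$ copies of $K_{3,3}$. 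Choosing $\tau$ small, the rich circles still carry $\ge\tfrac{c}{2}n^6$ copies, and as there are only $O(1/\tau)$ of them, some single circle $\CC$ satisfies $\binom{p(\CC)}{3}\binom{g(\CC)}{3}=\Omega(n^6)$, whence $p(\CC)=\Omega(n)$ and $g(\CC)=\Omega(n)$.

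To turn one large circle into a global bound I would control the edges that do \emph{not} belong to any ``rich biclique'' (apexes of a rich circle joined to its points). Let $G_0$ collect these edges. Any $K_{3,3}$ inside $G_0$ lives on a poor circle, so $G_0$ has $O(\tau^3 n^6)$ copies of $K_{3,3}$; the K\H{o}vari-S\'os-Tur\'an theorem, applied in the contrapositive via supersaturation, then forces $e(G_0)=o_\tau(n^2)$. Hence $e_r(S)\le o(n^2)+\sum_{\CC\text{ rich}}g(\CC)p(\CC)$. The point sets involved are almost disjoint: the circle parts $S\cap\CC$ pairwise meet in $\le 2$ points, a line meets a circle in $\le 2$ points, and $S\cap\CC$ is disjoint from the apex set on $\CL$; combined with $g(\CC)+p(\CC)\le n$, this is designed to give $\sum_{\CC\text{ rich}}g(\CC)p(\CC)\le \tfrac14 n^2+o(n^2)$, with near-equality only when a single rich pair $(\CC,\CL)$ has $g(\CC)\approx p(\CC)\approx n/2$. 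Together with $e_r(S)>(\tfrac14-\delta)n^2$ this pins down $\CC$ and $\CL$, forces $\card{S\cap\CC}=(\tfrac12+o(1))n$ and an apex set of size $(\tfrac12+o(1))n$ on $\CL$, and leaves only $o(n)$ points of $S$ either off $\CC\cup\CL$ or on $\CL$ but not apexes; collecting these into $T$ yields the asserted suspension.

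The main obstacle is the penultimate optimisation, namely bounding $\sum_{\CC\text{ rich}}g(\CC)p(\CC)$ by $\tfrac14 n^2+o(n^2)$. The clean estimate works when all apex sets and all circle sets are genuinely disjoint, but two phenomena spoil this: several rich circles may share one axis, and a single centre may be an apex of several rich circles lying on its sphere. Both inflate $\sum_\CC g(\CC)p(\CC)$ by a factor of up to $1/\tau$ if handled crudely, and this is exactly the slack one must remove. I expect this to require a finer charging of each edge to a single circle, exploiting that an apex good for many coaxial circles must distribute its one radius among them, so that such overlaps cannot simultaneously be large and edge-efficient. This is also the mechanism that produces the unavoidable exceptional set $T$, and it explains why the balance must fall at $n/2$--$n/2$: among all admissible packings, the product $g\cdot p$ subject to $g+p\le n$ is maximised only by a single balanced circle-axis pair.
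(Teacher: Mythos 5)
Your route is genuinely different from the paper's: the paper extracts one \emph{maximal} suspension via the K\H{o}vari--S\'os--Tur\'an theorem for digraphs, bounds all edge classes of the resulting partition term by term (this is Corollary~\ref{cor:induction}), rules out the degenerate case ``$n-t$ small'' by maximality plus a second application of KST~II, and reads off the balance $\ell\approx c\approx n/2$ from the leftover term $\left(\frac{\ell-c+5}{2}\right)^2$. Your counting route --- classify copies of $K_{3,3}$ by the circle they determine, locate rich circles by supersaturation, show edges outside rich bicliques are $o(n^2)$ --- is sound as far as it goes, and the identity $\#K_{3,3}=\sum_{\CC}\binom{p(\CC)}{3}\binom{g(\CC)}{3}$ is correct (three heads lie on a sphere, hence are non-collinear and determine their circle uniquely). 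But the proposal stops exactly at its decisive step: the bound $\sum_{\CC\,\mathrm{rich}}g(\CC)p(\CC)\leq\frac14n^2+o(n^2)$ is only ``designed to'' hold and is then declared the main obstacle, with a speculative remark about finer charging. As written this is a genuine gap: without that bound you get neither the contradiction pinning down a single circle nor the near-equality analysis that produces $T$ and the two balance conditions.

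The gap is fillable, and not by charging: the missing ingredient is the lemma that \emph{two distinct circles have at most one common apex}. If $x\neq y$ were apexes of both $\CC_1$ and $\CC_2$, then both circles would have the line through $x$ and $y$ as their axis; writing $d(z,\CC_i)^2=(z-a_i)^2+\rho_i^2$ for $z$ on this common axis, the apex conditions $r(z)^2=(z-a_1)^2+\rho_1^2=(z-a_2)^2+\rho_2^2$ at $z=x$ and $z=y$ subtract to give $a_1=a_2$, hence $\rho_1=\rho_2$ and $\CC_1=\CC_2$. This is precisely where the hypothesis that each point has a \emph{single} favourite distance enters, i.e.\ the mechanism you guessed at; note also that you already asserted without proof the closely related fact that a triple of apexes determines its circle, and this computation is its proof. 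With this lemma, plus the facts that two circles meet in at most two points and a circle meets a line in at most two points, the $2k$ sets $G_1,P_1,\dots,G_k,P_k$ attached to the $k=O_{\tau}(1)$ rich circles have pairwise intersections of size $O(1)$, so $\sum_i\bigl(g_i+p_i\bigr)\leq n+O_{\tau}(1)$, and then
\[
\sum_i g_ip_i\;\leq\;\frac14\sum_i\bigl(g_i+p_i\bigr)^2\;\leq\;\frac14\Bigl(\sum_i\bigl(g_i+p_i\bigr)\Bigr)^2\;\leq\;\frac14 n^2+O_{\tau}(n),
\]
so the $1/\tau$ inflation you feared from coaxial circles or shared apexes cannot occur at all. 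Near-equality then forces a single circle with $g_1+p_1\geq(1-O(\delta))n$ and $(g_1-p_1)^2=O(\delta)n^2$, which yields the suspension $P_1\cup G_1$, the exceptional set $T=S\setminus(P_1\cup G_1)$, and both estimates $\abs{\card{S\cap\CC}-n/2}<\epsi n$ and $\abs{\card{S\cap\CL}-n/2}<\epsi n$. A minor slip elsewhere: pairwise $\leq 2$-point intersections give $O(1/\tau^3)$ rich circles directly by your own triple-counting (the bound $O(1/\tau)$ needs a separate argument), but any bound depending only on $\tau$ suffices.
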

In the paper \cite{Sw-extremal4d} we determined $f_d(n)$ for all $d\geq 4$ and $n$ sufficiently large depending on $d$.
See also Erd\H{o}s and Pach \cite{EP}.
Csizmadia \cite{Cs} determined the maximum number of \emph{furthest distance pairs} in $\bR^3$, where we fix $r(\vx)= \max_{\vy\in S}\length{\vx\vy}$.
Favourite and furthest distances in the plane have been considered by Avis \cite{Avis} and Edelsbrunner and Skiena \cite{ES}.

In the next section we make a careful construction which proves the lower bound of Theorem~\ref{thm:3d}.
In Section~\ref{section:simple} we give a relatively straightforward induction proof of the upper bound $f_3(n)\leq n^2/4 + O(n^{5/3})$ (Theorem~\ref{thm:induction}).
Using the ideas of this proof we then prove the main theorems in Section~\ref{section:mainproofs}.

\section{Suspensions}\label{section:suspension}
In this section we consider some properties of favourite distance digraphs on suspensions and estimate the maximum of $\card{\dE_r(S)}$ taken over all suspensions $S$ of $n$ points.
Define $e_r(S)=\card{\dE_r(S)}$ and $e_r(A,B)=\card{\dE_r(S)\cap(A\times B)}$ for any $A,B\subseteq S$.
Without loss of generality, we let the radius of the circle $\CC$ of the suspension be $1$ and we identify $\CL$ with the real line $\bR$ so that the centre of $\CC$ is $0\in\bR$.
Write $C=S\cap\CC$ and $L=S\cap\CL$.
Then $r(x)=\sqrt{1+x^2}$ for each $x\in L$.
Consider the subdigraph $\dG_r(L)$.
For any arc $(x,y)\in\dE_r(L)$, we have $\abs{x-y}=d(x,y)=r(x)=\sqrt{1+x^2}$.
If we solve for $y$ we obtain $y=x\pm\sqrt{1+x^2}=:s^{\pm}(x)$, hence
each vertex $x\in L$ has at most two out-neighbours in $L$.
If we solve for $x$, we obtain $x=\frac12(y-1/y)=:p(y)$, hence each vertex $y\in L$ has at most one in-neighbour in $L$ (and if $0\in L$ then $0$ cannot have any in-neighbour).
Therefore, $e_r(L)\leq\card{L}:=\ell$.
Next, note that if $e_r(L)=\ell$, then each vertex has exactly one in-neighbour, and it follows that each connected component of $\dG_r(L)$ consists of a directed cycle of length at least $2$ together with binary trees where each binary tree is attached to the cycle at its root and its arcs are directed away from the root.

We can use angles to give a simple description of the dynamics of the \emph{predecessor} $p$ and \emph{successors} $s^{\pm}$ of a vertex in $L$.
For each $x\in\CL$, let $\theta(x)=\pi/2+\arctan x$.
Then $\theta(x)\in(0,\pi)$ is the angle between the ray from $x$ to a point on the circle $\CC$ and the ray from $x$ in the positive direction on $\CL$.
Then simple properties of angles in circles give that $\theta(s^+(x))=(\pi+\theta)/2$, $\theta(s^-(x))=\theta/2$ and $\theta(p(x))=2\theta(x)\pmod{\pi}$ (Figure~\ref{fig:angles}).
\begin{figure}
\centering
\begin{tikzpicture}[line cap=round,line join=round,>=triangle 45,scale=0.8, rotate=-0.5]
\clip(-2.4,0.025513075076106492) rectangle (13.973252175534737,6.427907128260514);
\fill [fill=blue!10] (0.52,1.64) ellipse [x radius = 1, y radius = 3.96];
\draw [draw=black!70, dashed] (0.52,-2.32) arc [start angle =-90, end angle = 90, x radius = 1, y radius = 3.96];
\draw [draw=black!90] (0.52,5.6) arc [start angle =90, end angle = 270, x radius = 1, y radius = 3.96];
\draw(1.0505014039484695,1.6453586000398834) -- (1.0451428039085862,2.175860003988353) -- (0.5146413999601166,2.1705014039484696) -- (0.52,1.64) -- cycle; 
\draw [shift={(-0.8637677754364076,1.6260225477228647)}] (0,0) -- (0.5787255656077527:1) arc (0.5787255656077527:71.31745699349653:1) -- cycle;
\draw [shift={(11.852537350824202,1.7544700742507484)}] (0,0) -- (0.5787255656077883:0.7502805531075489) arc (0.5787255656077883:161.31745699349653:0.7502805531075489) -- cycle;
\draw [shift={(5.494384787693898,1.6902463109868069)}] (0,0) -- (0.5787255656077537:0.8) arc (0.5787255656077537:142.0561884213853:0.8) -- cycle;
\draw [domain=0.52:13.973252175534737] plot(\x,{(--6.4736--0.04*\x)/3.96});
\draw (0.52,1.64)-- (0.48,5.6);
\draw(5.494384787693898,1.6902463109868069) circle (6.358476917297747cm);
\draw (0.48,5.6)-- (5.494384787693898,1.6902463109868069);
\draw (0.48,5.6)-- (11.852537350824202,1.7544700742507484);
\draw (-0.8637677754364076,1.6260225477228647)-- (11.852537350824202,1.7544700742507484);
\draw (-0.8637677754364076,1.6260225477228647)-- (0.48,5.6);
\draw (1.5,3.1) node[right] {\color{blue!80!black}$\CC$};
\draw (3.8,3.1) node[right] {$\sqrt{1+x^2}$};
\draw [fill=black] (0.52,1.64) circle (2pt) node[below] {$0$};
\draw (0.5,3.6) node[right] {$1$};
\draw [fill=black] (0.48,5.6) circle (2pt);
\draw [fill=black] (5.494384787693898,1.6902463109868069) circle (2pt);
\draw[color=black] (5.670147387811196,1.3) node {$x$};
\draw[color=black] (3.1,1.3) node {$\CL=\bR$};
\draw [fill=black] (-0.8637677754364076,1.6260225477228647) circle (2pt);
\draw[color=black] (-1.6,1.4) node {$s^-(x)$};
\draw [fill=black] (11.852537350824202,1.7544700742507484) circle (2pt);
\draw[color=black] (12.6,1.3) node {$s^+(x)$};
\draw[color=black] (-0.3,2.05) node {$\frac{\theta}{2}$};
\draw[color=black] (12.9,2.45) node {$\frac{\pi+\theta}{2}$};
\draw[color=black] (5.7,2.1) node {$\theta$};
\end{tikzpicture}
\caption{The mappings $s^{\pm}\colon\bR\to\bR$}\label{fig:angles}
\end{figure}
Thus, if we consider the binary expansion of $\alpha(x):=\theta(x)/\pi\in(0,1)$, then $\alpha(p(x))$ is the left-shift of $\alpha(x)$, $\alpha(s^-(x))$ is the right-shift of $\alpha(x)$ with a $0$ added to the left, and $\alpha(s^+(X))$ is the right-shift of $\alpha(x)$ with a $1$ added to the left.
It follows that the angles corresponding to the vertices of a directed cycle in $\dG_r(L)$ are all rational multiples of $\pi$ such that for any two vertices $x_1$ and $x_2$ of the cycle there exist $k_1,k_2\in\bN$ such that $2^{k_1}\alpha(x_1)-2^{k_2}\alpha(x_2)\in\bZ$.
In the extremal case where $\dG_r(L)$ has $\ell$ arcs, this property holds for any two vertices $x_1,x_2$ in the same connected component of $\dG_r(L)$.

\begin{proposition}\label{thm:suspensionupper}
For any suspension $S$ with $n$ points, $e_r(S)\leq  \lceil\frac{n^2}{4}+\frac{5n}{2}\rceil+2$.
\end{proposition}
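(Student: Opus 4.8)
The plan is to partition $S$ into $C=S\cap\CC$ and $L=S\cap\CL$, which are disjoint since the unit circle $\CC$ meets its axis $\CL$ in no point, so $\card{C}+\card{L}=n$; write $c=\card{C}$ and $\ell=\card{L}$. Every arc of $\dG_r(S)$ has its tail and head each in exactly one of $C,L$, so that $e_r(S)=e_r(L,L)+e_r(L,C)+e_r(C,L)+e_r(C,C)$, and I would bound the four terms separately.

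The two terms with tail in $L$ are immediate from the suspension structure already developed above. For $x\in L$ the distance from $x$ to \emph{every} point of $\CC$ equals $\sqrt{1+x^2}=r(x)$, so every pair in $L\times C$ is an arc and $e_r(L,C)=\ell c$. The predecessor analysis of the text shows that each vertex of $L$ has at most one in-neighbour in $L$, whence $e_r(L,L)\le\ell$.

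For the terms with tail in $C$ I would argue geometrically. Fix $y\in C$ and let $\rho=r(y)$. The sphere of radius $\rho$ centred at $y$ meets $\CC$ in at most two points (its intersection with the plane of $\CC$ is a circle centred at $y$, hence distinct from $\CC$ as $y\ne 0$, and two distinct coplanar circles meet in at most two points), and it meets $\CL$ in at most two points, since $\sqrt{1+x^2}=\rho$ forces $x=\pm\sqrt{\rho^2-1}$. Hence $y$ has at most four out-neighbours in $C\cup L$, giving $e_r(C,C)+e_r(C,L)\le 4c$. Combining the four estimates yields $e_r(S)\le \ell c+\ell+4c$.

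It then remains to optimise and clean up. Substituting $\ell=n-c$, the bound becomes $g(c)=-c^2+(n+3)c+n$, a downward parabola whose real maximum $g\!\left(\tfrac{n+3}{2}\right)=\tfrac{n^2}{4}+\tfrac{5n}{2}+\tfrac94$ dominates $g(c)$ for every integer $c\in\{0,\dots,n\}$. Since $e_r(S)$ is an integer, I would finish with the arithmetic check that $\bigl\lfloor\tfrac{n^2}{4}+\tfrac{5n}{2}+\tfrac94\bigr\rfloor=\bigl\lceil\tfrac{n^2}{4}+\tfrac{5n}{2}\bigr\rceil+2$, verifying it separately for $n$ even (where $\tfrac{n^2}{4}+\tfrac{5n}{2}$ is an integer) and $n$ odd (where it has fractional part $\tfrac34$). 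I expect no serious obstacle here: the only steps requiring care are the geometric count of at most two sphere–circle and two sphere–axis intersections, and the integrality bookkeeping that turns the slack $\tfrac94$ into exactly the claimed $+2$.
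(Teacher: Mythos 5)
Your proposal is correct and follows essentially the same route as the paper: the same partition into $C$ and $L$, the identical four bounds $e_r(L,C)=\ell c$, $e_r(L,L)\le\ell$, $e_r(C,C)+e_r(C,L)\le 4c$, and the same resulting estimate $\ell c+\ell+4c$, which the paper then maximises via $(\ell+4)(c+1)\le\lfloor(\ell+c+5)^2/4\rfloor$ while you maximise the equivalent quadratic in $c$ over the reals and invoke integrality of $e_r(S)$ --- the arithmetic in both cases correctly yields $\lceil\frac{n^2}{4}+\frac{5n}{2}\rceil+2$.
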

\begin{proof}
Let $c:=\card{C}$.
Thus $n=\ell+c$.
We estimate $e_r(S)$ by decomposing it as follows:
\[e_r(S) = e_r(L,C) + e_r(L) + e_r(C,L) +e_r(C).\]

Since each point in $L$ is joined to each point in $C$, $e_r(L,C)=\ell c$.
As shown above, each point in $L$ has at most one in-neighbour in $L$, so $e_r(L)\leq\ell$.
A sphere intersects a line in at most two points, hence $e_r(C,L)\leq 2c$.
Furthermore, a sphere with centre on $\CC$ intersects $\CC$ in at most two points, which gives $e_r(C)\leq 2c$.
Therefore,
\begin{align*}
e_r(S) &\leq \ell c + \ell + 4c = (\ell+4)(c+1)-4\\
&\leq \left\lfloor\frac{(\ell+c+5)^2}{4}\right\rfloor - 4
= \left\lfloor\frac{(n+5)^2}{4}\right\rfloor - 4\\
&= \left\lfloor \frac{n^2+10n+9}{4} \right\rfloor
= \left\lceil \frac{n^2}{4} + \frac{5n}{2} \right\rceil + 2.\qedhere
\end{align*}
\end{proof}
For sufficiently large $n$, we can almost attain the bound in Proposition~\ref{thm:suspensionupper}.
\begin{theorem}\label{thm:suspension}
For each $n\geq 13$ there is a suspension $S$ with $n$ points such that $e_r(S)\geq \lceil\frac{n^2}{4}+\frac{5n}{2}\rceil+1$.
\end{theorem}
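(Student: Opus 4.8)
The plan is to exhibit, for each $n\geq 13$, an explicit suspension $(S,r)$ and to measure it against Proposition~\ref{thm:suspensionupper}. Writing $c=\card{C}$ and $\ell=\card{L}=n-c$, I would use the same decomposition $e_r(S)=e_r(L,C)+e_r(L)+e_r(C,L)+e_r(C)$ and try to make all four terms as large as the bounds in that proof allow: $e_r(L,C)=\ell c$ holds automatically, and I would aim for $e_r(L)=\ell-1$, $e_r(C,L)=2c$, and $e_r(C)=2c$, giving $e_r(S)=\ell c+\ell+4c-1=(\ell+4)(c+1)-5$. Since the integer product $(\ell+4)(c+1)$ with fixed sum $(\ell+4)+(c+1)=n+5$ is maximised when the two factors are as equal as possible, and there the value is $\lfloor(n+5)^2/4\rfloor$, the quantity $(\ell+4)(c+1)-5$ equals exactly $\lceil n^2/4+5n/2\rceil+1$ at the balanced split $c\approx(n+3)/2$. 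So the whole problem reduces to realising these three lower-order terms at once.

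For $L$ I would work in the angle coordinate $\alpha(x)=\theta(x)/\pi$ of the introduction, in which the predecessor map $p$ acts as the doubling map $\alpha\mapsto 2\alpha\pmod 1$. A finite subset of $\CL$ whose angle set is closed under doubling gives every vertex its unique in-neighbour, so $e_r(L)=\ell$; the sole obstruction is the point $0\in\CL$ (angle $1/2$), which has no predecessor. I would therefore root $L$ at $0$, take its two successors $s^{\pm}(0)=\pm1$, and hang a binary tree of further successors off them. Then $\pm1\in L$, every vertex except $0$ has its predecessor in $L$, and $e_r(L)=\ell-1$ for any prescribed $\ell\geq 3$.

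The pair $\pm1$ is the right choice because $d(y,\pm1)=\sqrt{1+1}=\sqrt2$ precisely for $y$ on $\CC$; thus assigning the common radius $r(y)=\sqrt2$ to every $y\in C$ sends each circle point to both axis points $\pm1$, yielding $e_r(C,L)=2c$. The same value $\sqrt2$ is the side of a square inscribed in $\CC$, so if I take $C$ to be a disjoint union of $c/4$ squares in generic relative rotation, each circle point has exactly its two square-neighbours at distance $\sqrt2$ and no accidental extra coincidences arise, giving $e_r(C)=2c$. Choosing $c$ to be the multiple of $4$ nearest $(n+3)/2$ and $\ell=n-c$, this construction meets the target $\lceil n^2/4+5n/2\rceil+1$ whenever the balanced split itself is divisible by $4$.

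The hard part will be covering the remaining residue classes of $n$, for which the balanced $c$ is not a multiple of $4$, since the budget $(\ell+4)(c+1)-5$ leaves essentially no slack once the split is forced off its optimum. The difficulty is structural: to attain $e_r(C)=2c$ every point of $C$ needs two equidistant partners on $\CC$, a midpoint (harmonic) condition that forces each component of $C$ to be a regular polygon and hence its radius to be a chord $2\sin(\pi p/q)$; to attain $e_r(C,L)=2c$ with a predecessor-closed axis the same radius must equal $\csc(\pi j/m)=\sqrt{1+\cot^2(\pi j/m)}$ for an antipodal pair $\pm\cot(\pi j/m)$ in the doubling structure. Matching them forces $2\sin(\pi p/q)\sin(\pi j/m)=1$, and a Niven-type analysis of this equation leaves only $p/q=j/m=1/4$, i.e.\ exactly the $\sqrt2$/square configuration (which is moreover only pre-periodic, explaining the unavoidable single lost arc at $0$). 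Consequently the off-residues cannot be handled by gluing further regular polygons to a clean axis, and the genuine obstacle is to find a more flexible circle configuration—or a bounded local modification absorbing a defect of size $O(1)$—that still achieves $e_r(C)=e_r(C,L)=2c$ for circle sizes $c$ in the missing congruence classes while staying within the tight count $(\ell+4)(c+1)-5$.
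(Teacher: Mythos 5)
Your construction agrees with the paper's for the main case: the same decomposition, the same binary-tree choice of $L$ rooted at $0$ (giving $e_r(L)=\ell-1$), and the same inscribed squares with common radius $\sqrt2$ on $\CC$ (giving out-degree $4$ to every circle point via its two square-neighbours and the axis points $\pm1$). But there is a genuine gap: you only attain the target value $\left\lceil n^2/4+5n/2\right\rceil+1$ when the balanced split forces $c\equiv 0\pmod 4$, and you explicitly leave the remaining congruence classes of $n$ -- which is most of them -- unresolved. Since the statement claims the bound for \emph{every} $n\geq 13$, the proof is incomplete as it stands.

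Moreover, the structural claim you use to argue that the missing cases are a hard obstacle is false, and seeing why reveals the paper's fix. You assert that attaining $e_r(C)=2c$ forces each component of $C$ to be a regular polygon; this is only true if all points of $C$ carry the same radius. The favourite-distance digraph is not symmetric: a point $x\in C$ only needs its \emph{two} points of $\CC$ at distance $r(x)$ to lie in $C$, and nothing requires arcs back. The paper exploits exactly this freedom. Rotate two of the squares so that a vertex $a$ of one and a vertex $b$ of the other are at distance $\sqrt{8(\sqrt2-1)}$; then the point $p$ of $\CC$ midway between them satisfies $d(p,a)=d(p,b)=\sqrt{4-2\sqrt2}$, and this same number equals $\sqrt{1+(\sqrt2-1)^2}$, the distance from $p$ to the axis points $\pm(\sqrt2-1)$. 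Assigning $r(p)=\sqrt{4-2\sqrt2}$ to up to three such points (vertices of the square through $p$) gives each of them out-degree $4$ as well, with out-neighbours $a$, $b$ and $\pm(\sqrt2-1)$, so $e_r(C,S)=4c$ for \emph{every} value of $c$, not just multiples of $4$. This is also why the paper insists that $L$ contain the five points $0$, $\pm1$ and $\pm(\sqrt2-1)$ (all present in your binary tree, since $s^+(-1)=\sqrt2-1$ and $s^-(1)=-(\sqrt2-1)$), a requirement your construction omits; with it, taking $\ell=\lfloor(n-3)/2\rfloor\geq 5$ and $c=\lceil(n+3)/2\rceil\geq 8$ completes the count $(\ell+4)(c+1)-5=\lceil n^2/4+5n/2\rceil+1$ for all $n\geq 13$.
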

\begin{proof}
Consider the full binary tree consisting of $0$ and its successors obtained by repeatedly applying $s^-$ and $s^+$ (Figure~\ref{fig:tree}).
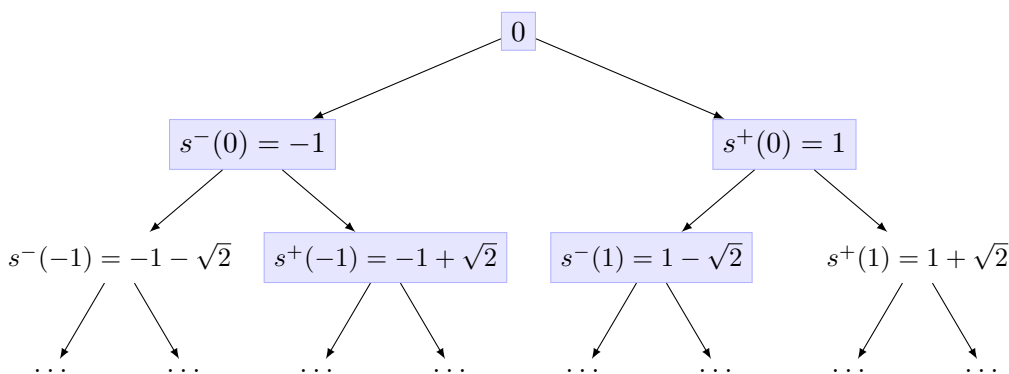
\begin{figure}
\centering
\begin{tikzpicture}
[level 1/.style={sibling distance=70mm,-latex},
 level 2/.style={sibling distance=35mm},
 level 3/.style={sibling distance=17.5mm},
 level 4/.style={sibling distance=10mm}]
  \node[rectangle,draw=blue!30,fill=blue!10] {$\,\!0$}
    child {node[rectangle,draw=blue!30,fill=blue!10] {$s^-(0)=-1$}
      child {node {\small $s^-(-1)=-1-\sqrt{2}$}
        child {node[rectangle] {\dots}}
        child {node {\dots}}
      }
      child {node[rectangle,draw=blue!30,fill=blue!10] {\small $s^+(-1)=-1+\sqrt{2}$}
        child {node {\dots}}
        child {node {\dots}}
      }
    }
    child {node[rectangle,draw=blue!30,fill=blue!10] {$s^+(0)=1$}
      child {node[rectangle,draw=blue!30,fill=blue!10] {\small $s^-(1)=1-\sqrt{2}$}
        child {node {\dots}}
        child {node {\dots}}
      }
      child {node {\small $s^+(1)=1+\sqrt{2}$}
        child {node {\dots}}
        child {node {\dots}}
      }
    };
\end{tikzpicture}
\caption{The full binary tree with root $0\in\CL$}\label{fig:tree}
\end{figure}
Let $L$ be the vertex set of any subtree with $\ell:=\lfloor (n-3)/2\rfloor\geq 5$ vertices that contains at least the $5$ points $0$, $\pm 1$ and $\pm(\sqrt{2}-1)$.
We then have $e_r(L)=\ell-1$.
We next show that we can choose $c:=\lceil(n+3)/2\rceil\geq 8$ points on $\CC$ such that $e_r(C,S)=4c$.

For the vertices of $C$, choose the vertices of $\lfloor c/4\rfloor\geq 2$ squares inscribed in $\CC$.
For each of these vertices $a$, define $r(a)=\sqrt{2}$.
Since $\pm 1\in L$, each $a$ has out-degree $4$.
If $c$ is not divisible by $4$, choose two of the squares to be such that one vertex $a$ of one square and one vertex $b$ of the other square are at distance $\sqrt{8(\sqrt{2}-1)}$.
This distance is chosen so that there exists a point $p$ on $\CC$ such that $d(a,p)=d(b,p)=\sqrt{4-2\sqrt{2}}=d(p,q)$, where $q$ is the point $\sqrt{2}-1\in L$.
See Figure~\ref{fig:circlesquare}.
\begin{figure}
\centering
\begin{tikzpicture}[line cap=round,line join=round,>=triangle 45,scale=0.6]
\draw(1.510597410548229,5.851976395257644) -- (-0.6319763952576445,0.13059741054822893) -- (5.089402589451771,-2.0119763952576446) -- (7.231976395257644,3.709402589451771) -- cycle;
\draw(1.510597410548229,-2.0119763952576446) -- (-0.6319763952576447,3.7094025894517704) -- (5.089402589451771,5.851976395257644) -- (7.231976395257642,0.1305974105482295) -- cycle;
\draw(1.934861479260157,1.92) -- (1.934861479260157,2.344264068711928) -- (1.510597410548229,2.344264068711928) -- (1.510597410548229,1.92) -- cycle; 
\draw(3.3,1.92) circle (4.32cm);
\draw (1.510597410548229,5.851976395257644)-- (-0.6319763952576445,0.13059741054822893);
\draw (-0.6319763952576445,0.13059741054822893)-- (5.089402589451771,-2.0119763952576446);
\draw (5.089402589451771,-2.0119763952576446)-- (7.231976395257644,3.709402589451771);
\draw (7.231976395257644,3.709402589451771)-- (1.510597410548229,5.851976395257644);
\draw (1.510597410548229,-2.0119763952576446)-- (-0.6319763952576447,3.7094025894517704);
\draw (-0.6319763952576447,3.7094025894517704)-- (5.089402589451771,5.851976395257644);
\draw (5.089402589451771,5.851976395257644)-- (7.231976395257642,0.1305974105482295);
\draw (7.231976395257642,0.1305974105482295)-- (1.510597410548229,-2.0119763952576446);
\draw (1.510597410548229,-2.0119763952576446)-- (1.510597410548229,5.851976395257644);
\draw (-1.02,1.92)-- (3.3,1.92);
\draw [fill=black] (3.3,1.92) circle (1.5pt) node[right] {$0$};
\draw [fill=black] (-1.02,1.92) circle (1.5pt);
\draw[color=black] (-1.4,2) node {$p$};
\draw [fill=black] (1.510597410548229,1.92) circle (1.5pt);
\draw [fill=black] (1.510597410548229,5.851976395257644) circle (1.5pt);
\draw[color=black] (1.3,6.14) node {$a$};
\draw [fill=black] (1.510597410548229,-2.0119763952576446) circle (1.5pt);
\draw[color=black] (1.3,-2.3) node {$b$};
\draw[color=black] (6.2,5.8) node {$\CC$};
\draw[color=black] (2.8,3.8) node {$\scriptstyle\sqrt{2(\sqrt{2}-1)}$};
\draw[color=black] (2.4,1.5) node {$\scriptstyle\sqrt{2}-1$};
\draw [fill=black] (7.231976395257644,3.709402589451771) circle (1.5pt);
\draw [fill=black] (-0.6319763952576445,0.13059741054822893) circle (1.5pt);
\draw [fill=black] (5.089402589451771,-2.0119763952576446) circle (1.5pt);
\draw [fill=black] (-0.6319763952576447,3.7094025894517704) circle (1.5pt);
\draw [fill=black] (7.231976395257642,0.1305974105482295) circle (1.5pt);
\draw [fill=black] (5.089402589451771,5.851976395257644) circle (1.5pt);
\end{tikzpicture}
\caption{Construction of points on $\CC$}\label{fig:circlesquare}
\end{figure}
The same holds for any of the other three vertices of the square with vertex $p$ inscribed in $\CC$.
We then add $c-4\lfloor c/4\rfloor$ vertices of this square to $C$ and define $r(p)= \sqrt{4-2\sqrt{2}}$ for each of these vertices $p$, to obtain a set $C$ of exactly $c$ points such that $e_r(C,S)=4c$.
Then
\begin{align*}
e_r(S) &= e_r(L,C) + e_r(L) + e_r(C,S)\\
&= \ell c+\ell - 1 + 4c = (\ell+4)(c+1)-5 \\
&= \left\lfloor\frac{(n+5)^2}{4}\right\rfloor - 5 = \left\lceil\frac{n^2}{4}+\frac{5n}{2}\right\rceil+1. \qedhere
\end{align*}
\end{proof}

If the upper bound of Proposition~\ref{thm:suspensionupper} is attained, it would have to be because of a very special algebraic coincidence, and we believe that this is not possible.
The following observation may help to prove this.
For a point $x\in C$ let $\fhi(x)$ be the angle $\myangle x0y$, where $y\in\CC$ satisfies $d(x,y)=r(x)$.
Then $r(x)=2\sin\frac{\fhi(x)}{2}$.
If also $(x,z)\in\dE_r(S)$ where $z\in L$, then it follows from $r(x)=d(x,z)=\sqrt{1+z^2}$ by elementary trigonometrical relations that $\sin\theta(z) \sin\frac{\fhi(x)}{2} = 1/2$.
It follows from a result of M.~Newman \cite{Newman} that there are only two solutions to this equation with both $\theta(z)$ and $\fhi(x)$ rational multiples of $\pi$ in the interval $(0,\pi)$, namely $(\theta(z),\fhi(x))=(\pi/4,\pi/2)$ and $(\theta(z),\fhi(x))=(\pi/2,\pi/3)$.
The first solution corresponds to our construction in Theorem~\ref{thm:suspension}, while the second solution correspond to an analogous construction with inscribed regular hexagons, which turns out to be worse than our construction.

\section{A simple upper bound}\label{section:simple}
We will use the K\H{o}vari-S\'os-Tur\'an Theorem in the standard form and in a form for directed graphs.
See for instance \cite[Theorem~2.2]{Bollobas}.
\begin{KST1}
Let $G=(V_1\cup V_2,E)$ be a bipartite graph with parts of sizes $\card{V_1}=m$ and $\card{V_2}=n$.
Suppose that $G$ does not contain a complete bipartite graph with parts of size $r$ and $s$, with the part of size $r$ contained in $V_1$ and the part of size $s$ contained in $V_2$.
Then $\card{E}\leq (s-1)^{1/r}(m-r+1)n^{1-1/r}+(r-1)n$.
\end{KST1}
Denote by $\dK_{r,s}$ the digraph $(V,\dE)$ with $V=\set{a_1,\dots,a_r,b_1,\dots,b_s}$ and \[\dE=\setbuilder{(a_i,b_j)}{i=1,\dots,r; j=1,\dots,s}.\]
\begin{KST2}
Let $\dG=(V,\dE)$ be a directed graph with $\card{V}=n$ vertices.
Suppose that $G$ does not contain a copy of $\dK_{r,s}$.
Then $\card{\dE}\leq (s-1)^{1/r}n^{2-1/r}+(r-1)n$.
\end{KST2}
\begin{proof}
Apply the K\H{o}vari-S\'os-Tur\'an Theorem I to the bipartite double cover of~$G$.
\end{proof}

The following upper bound improves the error term from \cite{AEP}.
The proof uses an induction argument which, although conceptually simple, needs some computation.
\begin{theorem}\label{thm:induction}
There exists $A>0$ such that for all $n\geq 0$ and for any set $S$ of $n$ points in $\bR^3$ and any function $r\colon S\to(0,\infty)$, $e_r(S)\leq \frac{n^2}{4}+An^{5/3}$.
\end{theorem}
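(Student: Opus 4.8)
The plan is to prove the bound by induction on $n$, reducing the inductive step to a structural analysis of the ``rich'' spheres via the K\H{o}vari--S\'os--Tur\'an Theorem and the geometry of sphere intersections in $\bR^3$. Write $d^+(x)$ for the out-degree of $x$ (the number of points of $S$ on the sphere of centre $x$ and radius $r(x)$) and $d^-(y)$ for the in-degree of $y$ (the number of those spheres through $y$), so that $e_r(S)=\sum_x d^+(x)=\sum_y d^-(y)$. Deleting a single vertex $x$ changes $e_r$ by exactly $d^+(x)+d^-(x)$, and since $\frac{(n-1)^2}{4}+\frac n2=\frac{n^2}{4}+\frac14$ while $A\bigl(n^{5/3}-(n-1)^{5/3}\bigr)\ge cAn^{2/3}$, the induction closes as soon as one can always locate a vertex $x^\ast$ with $d^+(x^\ast)+d^-(x^\ast)\le \tfrac n2+O(n^{2/3})$, the implied constant being independent of $A$ so that a large enough $A$ absorbs it. Thus the whole statement rests on the claim that the minimum total degree of $\dG_r(S)$ is at most $\tfrac n2+O(n^{2/3})$, or equivalently on a direct bound $e_r(S)\le \tfrac{n^2}{4}+O(n^{5/3})$ from which such a light vertex can be read off.

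To establish this I would separate the arcs by the out-degree of their source, with threshold $\tau:=n^{2/3}$. Arcs leaving a vertex with $d^+(x)<\tau$ number at most $\tau n=n^{5/3}$ altogether, hence are harmless, and the entire task is to control the \emph{rich} sources, those with $d^+(x)\ge\tau$. The crucial geometric input is that three spheres whose centres are not collinear meet in at most two points; equivalently, $\dG_r(S)$ contains no copy of $\dK_{3,3}$ whose three sources are affinely independent. Consequently, after discarding rich centres no three of which are collinear, the sub-digraph they span is $\dK_{3,3}$-free, and the K\H{o}vari--S\'os--Tur\'an Theorem II (with $r=s=3$) bounds its arcs by $2^{1/3}n^{5/3}+2n=O(n^{5/3})$. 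Therefore the entire quadratic contribution must come from rich sources whose centres share a line: by the same intersection fact, three collinear centres whose spheres have three common points force those points onto a common circle $\CC$ with the line of centres as axis $\CL$, i.e.\ onto (a portion of) a suspension.

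It then remains to bound the arcs out of the rich centres lying on such axes, and this is where I expect the main obstacle. For a single axis $\CL$ carrying $\ell$ rich centres and an associated circle $\CC$ with $m=\card{S\cap\CC}$ points, the dual form of the collinearity fact gives that a point off $\CC$ lies on the spheres of at most two of the collinear centres (a third would produce three spheres with collinear centres sharing three points, hence a point forced onto $\CC$); so arcs to points outside $\CC$ total $O(n)$, while arcs into $\CC$ are at most $\ell m\le(\ell+m)^2/4\le n^2/4$ since $\CC\cap\CL=\emptyset$. The real difficulty is to run this accounting \emph{simultaneously over all axes}: a priori there may be many rich lines and rich circles, and one must ensure their contributions do not overlap to exceed $n^2/4$. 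Because two distinct circles meet in at most two points and two distinct lines in at most one, the mass should concentrate on a single dominant suspension; making this precise and checking that all non-dominant suspensions contribute only $O(n^{5/3})$ is the crux, and it is cleanly organised by the induction above—one removes the points of the richest circle together with its axis, absorbs the $O(n^{2/3})$ error, and recurses, so the quadratic term is produced exactly once and never double-counted. I would stress that a purely second-moment estimate on $\sum_y\binom{d^-(y)}{2}$ only yields $e_r(S)\le cn^2$ with $c>\tfrac14$, which is precisely why the sharp constant $\tfrac14$ demands the first-moment, extremal accounting sketched here rather than a convexity argument.
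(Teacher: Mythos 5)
There is a genuine gap, and it sits exactly where you flag ``the crux''. Two problems. First, your induction scaffolding is circular: closing the induction requires a vertex of total degree at most $\frac n2+Cn^{2/3}$ with $C$ \emph{independent of} $A$, and the only source you offer for such a vertex is ``a direct bound $e_r(S)\le\frac{n^2}{4}+O(n^{5/3})$'' --- which is the theorem itself. (If instead the light vertex is read off from the inductive bound, its degree is $\frac n2+2An^{2/3}$, and since $2A>\frac53A$ the deletion argument does not close.) So everything rests on your direct argument, and that argument contains a false step: you claim that a point off the circle $\CC$ lies on the spheres of at most two centres of the axis $\CL$, ``since a third would force the point onto $\CC$''. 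This is not so. Three spheres centred on $\CL$ through a common point $y$ force $y$ only onto \emph{its own} circle of revolution about $\CL$, which need not be $\CC$: a single line of centres can simultaneously serve many distinct coaxial circles, and a point on any one of them can have in-degree from the axis as large as $\card{S\cap\CL}$ (take every centre's radius equal to its distance to that circle). Concretely, if all $\ell$ axis centres have radii pointing to a coaxial circle $\CC'$ with $\Theta(n)$ points of $S$ while a different coaxial circle $\CC$ happens to carry more points, your accounting bounds the off-$\CC$ arcs by $O(n)$ when in fact there are $\Theta(n^2)$ of them. Hence neither the per-axis bound nor the multi-axis bookkeeping (which you explicitly leave as a gesture) is established.

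The paper closes precisely this hole by choosing the suspension through maximality rather than richness: when $e_r(S)\ge 0.23n^2$, the K\H{o}vari--S\'os--Tur\'an Theorem II produces a $\dK_{p,p}$, and one takes a complete bipartite subdigraph $\dK_{\ell,c}$ with $\ell,c\ge p$ and $\ell+c$ \emph{maximum}. Completeness forces every centre $x\in L$ to satisfy $r(x)=d(x,\CC)$, so any two of these spheres meet exactly in $\CC$; maximality then yields the bounds your sketch is missing: $e_r(L,T)\le t$ (a point of $T$ with two in-neighbours in $L$ would lie on $\CC$ and could be added to $C$) and $e_r(T,C)\le 2t$ (a point of $T$ with three out-neighbours in $C$ would lie on $\CL$ with the correct radius and could be added to $L$). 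Together with Theorem~I of K\H{o}vari--S\'os--Tur\'an for $e_r(C,T)$ and the sphere--line and sphere--circle intersection bounds, this gives
\[
e_r(S)\le \ell c+\frac{t^2}{4}+At^{5/3}+O(n)+O\bigl(tc^{2/3}\bigr),
\]
and since $\ell c\le\bigl(\frac{n-t+7}{2}\bigr)^2$, the slack $-\frac{t(n-t)}{2}$ absorbs all error terms after a case split on $t\ge n/2$ versus $t<n/2$ (with $p\ge 29$ and $A$ large). Your proposal of ``remove the richest circle together with its axis and recurse'' has the right shape --- it is essentially the paper's induction --- but without the maximal complete bipartite structure none of the cross-term bounds that make the arithmetic close are available.
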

\begin{proof}
Fix a constant $p\geq 3$, to be determined later.
The K\H{o}vari--S\'os--Tur\'an Theorem II guarantees the existence of a $n_0=n_0(p)$ such that any directed graph on $n\geq n_0$ vertices and with at least $0.23n^2$ edges contains a $\dK_{p,p}$.
We can ensure that the theorem holds for all $n\leq n_0$ by taking $A=A(n_0)$ large enough.
Assume next that $n>n_0$ and that the theorem holds for sets of up to $n-1$ points.

Without loss of generality, $e_r(S)\geq 0.23n^2$.
Let $L$ and $C$ be the two classes of a $\dK_{\ell,c}$ contained in $\dG_r(S)$ chosen such that $\ell+c$ is maximal among all such $\dK_{\ell,c}$, where $\ell:=\card{L}$, $c:=\card{C}$ and $\ell,c\geq p$.
Then $C$ lies on a circle $\CC$ and $L$ on its axis of symmetry $\CL$, and $r(p)=d(p,C)$ for each $p\in L$.
That is, $C\cup L$ forms a suspension of maximum cardinality among all those contained in $S$ with $\card{C},\card{L}\geq p$.
Let $T=S\setminus(C\cup L)$ and write $t:=\card{T}$.
We will bound $e_r(S)$ by writing it as the following sum and then bounding each term separately.
\begin{align*}
e_r(S)  &= e_r(L,C) + e_r(C,L)+ e_r(T,L)+e_r(L)+e_r(C)\\
&\quad +e_r(L,T)+e_r(T,C)+e_r(C,T)+e_r(T).
\end{align*}
See Figure~\ref{fig1}.
\begin{figure}
\centering
\begin{tikzpicture}[-triangle 45, bend angle=15, bend left, outer sep=0pt, scale=3, auto, shorten >=3pt, shorten <=3pt, scale=0.75]
\node (L) at (-0.5,1) [minimum size =25mm, circle, draw, label=above left:$L$] {$\leq\ell$};
\node (C) at (2.5,1) [minimum size =25mm, circle, draw, label=above right:$C$] {$\leq 2c$};
\node (T) at (1,-0.5) [minimum size =25mm, circle, draw, label=-20:$T$] {$\leq\frac{t^2}{4}+At^{5/3}$};
\draw (L) to node [pos=0.53] {$\ell c$} (C);
\draw (C) to node [pos=0.6,swap] {$\leq 2 c$} (L);
\draw (L) to node [pos=0.6] {$\leq t$} (T);
\draw (T) to node [pos=0.6] {$\leq 2 t$} (L);
\draw (C) to node [outer sep=-4pt,pos=0.6] {$\leq 2c+2^{1/3}tc^{2/3}$} (T);
\draw (T) to node [outer sep=-2pt,pos=0.4] {$\leq 2 t$} (C);
\end{tikzpicture}
\caption{Bounding $e_r(S)$ from above}\label{fig1}
\end{figure}
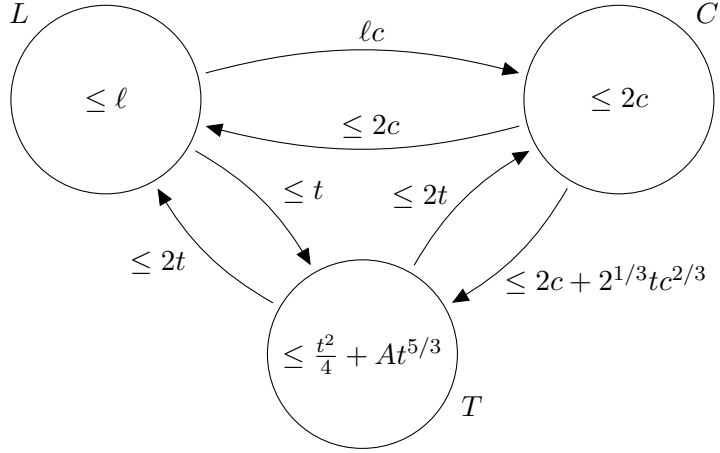
Since each vertex on $L$ is joined to each vertex on $C$, we have $e_r(L,C)=\ell c$.
A sphere and a line intersects in at most $2$ points, hence $e_r(C,L)\leq 2c$ and $e_r(T,L)\leq 2t$.
Each vertex on $L$ has at most one in-neighbour on $L$, hence $e_r(L)\leq\ell$.
A circle and a sphere with centre on the circle intersect in at most $2$ points, hence $e_r(C)\leq 2c$.

Suppose that some $x\in T$ has at least two in-neighbours $y_1,y_2\in L$, say.
It then follows that $x$ lies on the intersection of the spheres with centre $y_i$ and radius $r(y_i)=r(y_i,\CC)$, $i=1,2$, which is the circle $\CC$.
This contradicts the maximality of $C\cup L$.
Therefore, $e_r(L,T)\leq t$.

Suppose that some $x\in T$ has at least three out-neighbours $y_1,y_2,y_3\in C$, say.
Then necessarily $x\in\CL$, which contradicts the maximality of $C\cup L$.
Therefore, $e_r(T,C)\leq 2t$.

There is no $\dK_{3,3}$ from $C$ to $T$, otherwise there would be a $\dK_{3,3,3}$ from $L$ to $C$ to $T$, which is not realisable in $\bR^3$.
By the K\H{o}vari-S\'os-Tur\'an Theorem I (applied to the reverse bipartite graph from $T$ to $C$), $e_r(C,T)\leq 2c+ 2^{1/3}tc^{2/3}$.

Finally, we estimate $e_r(T)\leq t^2/4+At^{5/3}$ by the induction hypothesis:
\begin{align*}
e_r(S) &\leq \ell c + 2c + 2t +\ell+2c+t+2t+(2c+2^{1/3}tc^{2/3})+\frac{t^2}{4}+At^{5/3}\\
&= (\ell+6)(c+1) - 6 + \frac{t^2}{4} + At^{5/3} +5t + 2^{1/3} tc^{2/3}\\
&= \left(\frac{\ell+c+7}{2}\right)^2 - \left(\frac{\ell-c+5}{2}\right)^2 + \frac{t^2}{4} + At^{5/3} +5t -6 + 2^{1/3} tc^{2/3}\\
&\leq \left(\frac{n-t+7}{2}\right)^2 + \frac{t^2}{4} + At^{5/3} +5t -6 + 2^{1/3} t(n-t)^{2/3}\\
&= \frac14(n^2-2nt+2t^2+14n+6t+25+4At^{5/3}) + 2^{1/3}t(n-t)^{2/3}.
\end{align*}
Since $\ell,c\geq p\geq 3$,
\[14n+6t+25\leq 14n+6(n-2p)+25 = 20n-12p+25 < 20n,\]
hence,
\[ e_r(S)\leq \frac14(n^2-2nt+2t^2+20n+4At^{5/3}) + 2^{1/3}t(n-t)^{2/3},\]
which will be $\leq \frac14 n^2 + An^{5/3}$, thus finishing the induction step, if
\begin{equation}\label{one}
-2nt + 2t^2 + 20n +4At^{5/3} + 4\cdot 2^{1/3}t(n-t)^{2/3} \leq 4An^{5/3}.
\end{equation}
We next show that \eqref{one} holds if $n-t$ is sufficiently large.
Since $n-t\geq 2p$, we can ensure that $n-t$ is large by choosing $p$ large.

First suppose that $t\geq n/2$.
The inequality \eqref{one} will follow if
\[ -2nt+2t^2+20n+4\cdot 2^{1/3}t(n-t)^{2/3}\leq 0,\]
which is equivalent to 
\[ (n-t)^{2/3}((n-t)^{1/3}-2^{4/3})\geq 10n/t.\]
Since $10n/t\leq 20$, it is sufficient to have
\[ (n-t)^{2/3}((n-t)^{1/3}-2^{4/3})\geq 20,\]
which holds if $n-t\geq 58$.
We can ensure this by requiring that $p\geq 29$.

Next consider the remaining case where $t<n/2$.
Since $t\mapsto t(n-t)^{2/3}$ is increasing on $[0,n/2]$, we have $t(n-t)^{2/3}\leq(n/2)^{5/3}$.
Also, $-2nt+2t^2\leq 0$ and $20n\leq 20n^{5/3}$.
Therefore, to derive \eqref{one}, it is sufficient to show that 
\[ 20n^{5/3} + 4A(n/2)^{5/3} +4\cdot 2^{1/3}(n/2)^{5/3} \leq 4An^{5/3}.\]
This inequality is equivalent to
\[ A \geq \frac{5+2^{-4/3}}{1-2^{-5/3}},\]
which can be ensured.
This finishes the induction step.
\end{proof}

\begin{corollary}\label{cor:induction}
There exists $p\geq 3$, $A>0$ and $n_0\in\bN$ such that any set $S\subset\bR^3$ of $n\geq n_0$ points with $e_r(S)\geq 0.23n^2$ can be partitioned into a suspension $C\cup L$ and a remainder set $T$ such that $\card{C}=c\geq p$, $\card{L}=\ell\geq p$, $\card{T}=t$ and
\begin{align}
&\mathrel{\phantom{\leq}} e_r(S)+\left(\frac{\ell-c+5}{2}\right)^2\notag \\
&\leq \frac14(n^2-2nt+2t^2+14n+6t+25+4At^{5/3})+2^{1/3}t(n-t)^{2/3}\label{bound}\\
&\leq \frac14 n^2+An^{5/3}. \notag
\end{align}
\end{corollary}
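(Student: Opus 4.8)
The plan is to rerun the counting argument from the proof of Theorem~\ref{thm:induction}, except that the term $e_r(T)$ is now bounded by invoking the already-proved Theorem~\ref{thm:induction} itself rather than an induction hypothesis. Concretely, I would first fix the constants as in that proof: take $p\geq 29$ and $A\geq(5+2^{-4/3})/(1-2^{-5/3})$, enlarging $A$ if necessary so that Theorem~\ref{thm:induction} holds with this $A$, and let $n_0=n_0(p)$ be such that, by the K\H{o}vari--S\'os--Tur\'an Theorem~II, every digraph on $n\geq n_0$ vertices with at least $0.23n^2$ arcs contains a $\dK_{p,p}$. Given $S$ with $n\geq n_0$ and $e_r(S)\geq 0.23n^2$, I would select a $\dK_{\ell,c}\subseteq\dG_r(S)$ with $\ell,c\geq p$ and $\ell+c$ maximal. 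As in Theorem~\ref{thm:induction}, the in-part lies on a circle $\CC$ and the out-part on its symmetry axis $\CL$ with $r(x)=d(x,\CC)$ on the axis, so writing $C,L$ for these parts, $C\cup L$ is a suspension; set $T:=S\setminus(C\cup L)$ and $c:=\card{C}$, $\ell:=\card{L}$, $t:=\card{T}$, which supplies the required partition.

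Next I would decompose $e_r(S)$ into the same nine terms and bound them identically. The estimates $e_r(L,C)=\ell c$, $e_r(C,L)\leq 2c$, $e_r(C)\leq 2c$, $e_r(T,L)\leq 2t$ and $e_r(L)\leq\ell$ come from sphere--line and sphere--circle incidences and the in-degree count on $\CL$; the bounds $e_r(L,T)\leq t$ and $e_r(T,C)\leq 2t$ follow from the maximality of $C\cup L$; and $e_r(C,T)\leq 2c+2^{1/3}tc^{2/3}$ follows from the K\H{o}vari--S\'os--Tur\'an Theorem~I applied to the absence of a $\dK_{3,3}$ from $C$ to $T$. The only alteration is $e_r(T)\leq t^2/4+At^{5/3}$, obtained by applying Theorem~\ref{thm:induction} directly to the $t$-point set $T$. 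Summing these and completing the square in $\ell,c$ with $\ell+c=n-t$ gives
\[
e_r(S)+\left(\frac{\ell-c+5}{2}\right)^2
\leq\frac{(n-t+7)^2}{4}+\frac{t^2}{4}+At^{5/3}+5t-6+2^{1/3}tc^{2/3},
\]
and regrouping the polynomial terms, together with the substitution $c\leq n-t$ (whence $tc^{2/3}\leq t(n-t)^{2/3}$), produces exactly the first inequality of~\eqref{bound}.

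For the second inequality of~\eqref{bound} I would reproduce the closing estimates of the proof of Theorem~\ref{thm:induction}: using $t\leq n-2p$ to replace $14n+6t+25$ by $20n$, it suffices to establish~\eqref{one}, and the split into the ranges $t\geq n/2$ and $t<n/2$ does this once $p\geq 29$ and $A\geq(5+2^{-4/3})/(1-2^{-5/3})$. I do not anticipate a genuine obstacle, since the corollary merely records the structural content of the ``$e_r(S)\geq 0.23n^2$'' branch of the earlier argument. The one point needing care is that the bound on $e_r(T)$ is now drawn from the theorem itself, so I must confirm this is not circular---which it is not, as the corollary is stated after Theorem~\ref{thm:induction} is proved---and that the two harmless reductions (the replacement $tc^{2/3}\leq t(n-t)^{2/3}$ for the first inequality, and the case split on $t$ for the second) are carried through correctly.
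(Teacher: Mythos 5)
Your proposal is correct and is exactly the paper's argument: the paper's own proof of Corollary~\ref{cor:induction} simply says to rerun the proof of Theorem~\ref{thm:induction} with the induction hypothesis on $T$ replaced by an application of Theorem~\ref{thm:induction} itself, which is precisely what you do (and you verify the arithmetic of completing the square and the closing case split correctly).
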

\begin{proof}
Proceed as in the proof of Theorem~\ref{thm:induction}, but instead of applying an induction hypothesis to $T$, apply Theorem~\ref{thm:induction} itself to $T$.
\end{proof}

\section{Proof of the main theorems}\label{section:mainproofs}
\begin{proof}[Proof of Theorem~\ref{thm:3dstability}]
Without loss of generality, by requiring $\delta\leq 1/50$, we may assume that $e_r(S) > 0.23 n^2$.
By Corollary~\ref{cor:induction} there exist $p\geq 3$, $n_0\in\bN$, $A>0$ such that any set $S\subset\bR^3$ of at least $n\geq n_0$ points can be partitioned into a suspension $C\cup L$ with cardinalities $\card{C}=c\geq p$ and $\card{L}=\ell\geq p$ and a remainder set $T$ of cardinality $\card{T}=t$ such that \eqref{bound} holds.
Combine \eqref{bound} with the lower bound $e_r(S)\geq (1/4-\delta)n^2$ to obtain
\begin{align*}
2nt-2t^2 &\leq 4\delta n^2 + 14n + 6t + 25 + 4At^{5/3} + 4\cdot 2^{1/3}t(n-t)^{2/3}\\
& < 5\delta n^2
\end{align*}
for $n$ sufficiently large depending on $\delta$ and $A$.
The resulting quadratic inequality in $t$ implies that
\[ t < \left(\frac{1-\sqrt{1-10\delta}}{2}\right)n\quad\text{or}\quad t > \left(\frac{1+\sqrt{1-10\delta}}{2}\right)n.\]
Thus, either $t$ or $n-t$ is small.
We next show that $n-t$ is not small.
Recall from the proof of Theorem~\ref{thm:induction} that the suspension $C\cup L$ was chosen such that $c+\ell$ is maximised subject to $\ell,c\geq p$.
This implies that the favourite distance digraph $\dG_r(S)$ does not contain a $\dK_{p,n-t}$, hence by the K\H{o}vari-S\'os-Tur\'an Theorem II,
\[ e_r(S) \leq (n-t-1)^{1/p} n^{2-1/p} + (p-1)n.\]
Combine this with $e_r(S)\geq(1/4-\delta)n^2$ to obtain 
\[ t+1 \leq \left(1-\left(\frac{1}{4}-\delta-\frac{p-1}{n}\right)^p\right)n.\]
If $t > (1+\sqrt{1-10\delta})n/2$, then
\[ \frac{1+\sqrt{1-10\delta}}{2} < 1-\left(\frac{1}{4}-\delta-\frac{p-1}{n}\right)^p,\]
which gives a contradiction if $\delta$ is sufficiently small and $n$ sufficiently large, both depending on $p$.
Therefore,
\[ t < \left(\frac{1-\sqrt{1-10\delta}}{2}\right)n <\epsi n\]
if $\delta$ is sufficiently small depending on $\epsi$.

The bound
\[ e_r(S) + \left(\frac{\ell-c+5}{2}\right)^2 \leq \frac{n^2}{4} + An^{5/3}\]
from Corollary~\ref{cor:induction}, together with the lower bound $e_r(S)\geq (1/4-\delta)n^2$ gives
\[ \left(\frac{\ell-c+5}{2}\right)^2 \leq \delta n^2 + An^{5/3} < 2\delta n^2\]
for $n$ sufficiently large depending on $\delta$ and $A$.
Then $\abs{\ell-c}\leq 3\sqrt{\delta} n$ for $n$ sufficiently large depending on $\delta$, hence $\abs{\ell-c} <\epsi n$ for $\delta$ sufficiently small depending on $\epsi$.
It follows that $\abs{\ell-n/2}<\epsi n$ and $\abs{c-n/2}<\epsi n$.
\end{proof}

\begin{proof}[Proof of Theorem~\ref{thm:3dextremal}]
Let $S\subset\bR^3$ with $\card{S}=n$ and $r\colon S\to(0,\infty)$ satisfy $e_r(S)=f_3(n)$.
Since $f_3(n)\geq n^2/4 + 5n/2$ by Theorem~\ref{thm:suspension}, Corollary~\ref{cor:induction} gives that for each $\epsi>0$ there exists $A>0$ and $n_0\in\bN$ such that for all $n\geq n_0$, $S$ consists of a spindle $C\cup L$ except for an exceptional set $T\subseteq S$
such that
\[
e_r(S) \leq \frac14(n^2-2nt+2t^2+14n+6t+25+4At^{5/3}) + 2^{1/3}t(n-t)^{2/3}.
\]
By Theorem~\ref{thm:3dstability} we can also assume that $t\leq n/4$ by making $n$ sufficiently large.
It follows that
\[ \frac{n^2}{4} + \frac{5n}{2} \leq \frac14(n^2-2nt+2t^2+14n+6t+25+4At^{5/3}) + 2^{1/3}t(n-t)^{2/3},\]
hence 
\begin{align*}
0 & \leq 2n(2-t)+2t^2+6t+25+4At^{5/3} + 4\cdot2^{1/3}t(n-t)^{2/3}\\
&\leq 2n(2-t)+2t^2+Btn^{2/3}:= g(t)
\end{align*}
for sufficiently large $B$ depending on $A$.
It is easy to check that for each sufficiently large fixed $n$, $g(t)$ is decreasing on $[0,n/4]$ and negative for $t=3$.
It follows that $t\leq 2$.
\end{proof}

\begin{proof}[Proof of Theorem~\ref{thm:3d}]
The lower bound is given by Theorem~\ref{thm:suspension}.
Consider an extremal example with sufficiently many points so that Theorem~\ref{thm:3dextremal} can be applied.

If $T$ is empty, then $S$ is a suspension, and Proposition~\ref{thm:suspensionupper} provides the upper bound.

If $\card{T}=1$, then $\ell+c=n-1$ and 
\begin{align*}
e_r(S) &= e_r(L,C) + e_r(C,L) + e_r(C) + e_r(L) +  e_r(L,T) + e_r(C,T) + e_r(T,L\cup C)\\
&\leq \ell c + 2c + 2c + \ell + 1 + c + 4 = (\ell+5)(c+1)\\
&\leq \left\lfloor \left(\frac{\ell+c+6}{2}\right)^2\right\rfloor = \left\lceil \frac{n^2}{4}+\frac{5n}{2}\right\rceil + 6,
\end{align*}
where the separate estimates are made as in the proof of Theorem~\ref{thm:induction}.

Similarly, if $\card{T}=2$, then $\ell+c=n-2$ and 
\begin{align*}
e_r(S) &= e_r(L,C) + e_r(C,L) + e_r(C) + e_r(L)\\
\phantom{e_r(S)} &\mathrel{\phantom{=}} +  e_r(L,T) + e_r(C,T) + e_r(T,L\cup C) + e_r(T)\\
&\leq \ell c + 2c + 2c + \ell + 2 + 2c + 2\cdot4 + 2 = (\ell+6)(c+1) + 6\\
&\leq \left\lfloor \left(\frac{\ell+c+7}{2}\right)^2\right\rfloor+6 = \left\lceil \frac{n^2}{4}+\frac{5n}{2}\right\rceil + 12. \qedhere
\end{align*}
\end{proof}

\end{document}